\newtheorem{theorem}{Theorem}[section]
\newtheorem{prop}[theorem]{Proposition}
\theoremstyle{definition} % changes the formatting of the environments defined below
\numberwithin{theorem}{section}
\newtheorem{definition}[theorem]{Definition}
\newcommand{\RR}{\mathbb{R}}
\newcommand{\CC}{\mathbb{C}}
\newcommand{\ZZ}{\mathbb{Z}}
\title{More pointsets with many rich lines}
\author{Gabriel Currier}
\address{Department of Mathematics \\ University of British Columbia \\ Vancouver  V6T 1Z2 \\ Canada}
\email{currierg@math.ubc.ca}
\begin{document}

\begin{abstract} We present some new sharp constructions for the Szemer\'edi-Trotter theorem. These constructions generalize previous work of Erd\H os, Elekes, Sheffer and Silier, Guth and Silier, and the author. In the past, arguments showing the optimality of many of these constructions have required some elementary number theory and have been rather technical, thus limiting the scope of the results. We replace these number-theoretic arguments with purely incidence-geometric ones, allowing for simpler proofs and more general results.
\end{abstract}
\maketitle

\section{Introduction} 

An \emph{incidence} between a pointset $P$ and a set of curves $C$ in the plane is a tuple $(p,c) \in P \times C$ such that the point $p$ is on the curve $c$. In 1983, Szemer\'edi and Trotter \cite{SZTR} gave a bound on the number of incidences between points and lines.

\begin{theorem}\label{thm:szt1}
Let $n,m$ be positive integers with $n^{1/2} \le m \le n^2$. Then, the maximum number of incidences between a set of $n$ points and a set of $m$ lines in $\RR^2$ is $O(n^{2/3}m^{2/3})$\footnote{Here, we say $F = O(G)$ or equivalently $G = \Omega(F)$ if there exists an absolute constant $C$ such that $F \le CG$. Sometimes, we will allow these constants to depend on a parameter $s$; in this case we will state this or include a subscript as in $F= O_s(G)$.}.
\end{theorem}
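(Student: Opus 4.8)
The plan is to deduce Theorem~\ref{thm:szt1} from the \emph{crossing number inequality}: any simple graph drawn in the plane with $v$ vertices and $e \ge 4v$ edges determines at least $e^3/(64v^2)$ crossing pairs of edges. This is Sz\'ekely's approach, and it has the virtue of reducing the entire statement to one clean input from extremal graph theory (which in turn follows from Euler's formula together with a random-sampling amplification). An alternative route through the Clarkson--Edelsbrunner--Guibas--Seidel--Welzl cell decomposition, or through the polynomial partitioning method, would also work, but Sz\'ekely's argument is the shortest to write down.

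First I would set up the incidence graph. Let $P$ be the pointset with $|P| = n$, let $L$ be the set of lines with $|L| = m$, and write $I = I(P,L)$ for the number of incidences. Deleting from $L$ every line meeting $P$ in at most one point decreases $I$ by at most $m$, so we may assume each remaining line $\ell$ contains $k_\ell \ge 2$ points of $P$. Define a graph $G$ on vertex set $P$ by joining consecutive points of $P$ along each line $\ell$; this contributes $k_\ell - 1$ edges per line, so $G$ has $v = n$ vertices and $e = \sum_\ell (k_\ell - 1) \ge I - 2m$ edges, and $G$ comes with a plane drawing whose edges are segments of the lines in $L$. Since two distinct lines cross in at most one point, this drawing has at most $\binom{m}{2} \le m^2/2$ crossings, and hence the crossing number of $G$ is at most $m^2/2$.

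Then I would run the usual dichotomy on the edge count. If $e < 4n$ then $I < 4n + 2m$. Otherwise the crossing number inequality applies and yields $e^3/(64n^2) \le m^2/2$, whence $e \le 32^{1/3}\, n^{2/3} m^{2/3}$ and $I \le 32^{1/3}\, n^{2/3} m^{2/3} + 2m$. Either way $I = O(n^{2/3}m^{2/3} + n + m)$, and the hypothesis $n^{1/2} \le m \le n^2$ is precisely what is needed to absorb the lower-order terms: $n^{1/2} \le m$ gives $n \le n^{2/3}m^{2/3}$, and $m \le n^2$ gives $m \le n^{2/3}m^{2/3}$, so $I = O(n^{2/3}m^{2/3})$.

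The only genuinely nontrivial ingredient is the crossing number inequality itself, and I expect establishing it to be the main obstacle if one wants a self-contained account; everything else — the graph construction, the crossing bound $\binom{m}{2}$, the case split, and the absorption of lower-order terms — is routine bookkeeping. For completeness I would include its short proof: Euler's formula gives $\mathrm{cr}(H) \ge e(H) - 3v(H)$ for every graph $H$; applying this to a random induced subgraph that retains each vertex independently with probability $p = 4v/e$, taking expectations, and optimizing in $p$ produces the cubic bound.
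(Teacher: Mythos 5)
Your argument is correct. Note, however, that the paper does not prove Theorem~\ref{thm:szt1} at all --- it is quoted as a known result with a citation to Szemer\'edi and Trotter \cite{SZTR}, and the paper's actual content lies in the matching lower-bound constructions. So there is no in-paper proof to compare against; what you have supplied is Sz\'ekely's crossing-number proof, which is the standard modern route and is materially simpler than the original cell-decomposition argument of \cite{SZTR}. Your bookkeeping checks out: the graph has no multiple edges since two points determine a unique line, the edge count $e \ge I - 2m$ and the crossing bound $\binom{m}{2}$ are right, and the dichotomy plus the hypotheses $n^{1/2} \le m \le n^2$ correctly absorb the $n$ and $m$ terms into $n^{2/3}m^{2/3}$ (via $n^{1/3} \le m^{2/3}$ and $m^{1/3} \le n^{2/3}$). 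One small remark: the paper later uses the incidence bound in the form $O(n^{2/3}|L|^{2/3} + n + |L|)$ in Claim~4 of Section~\ref{sec:main}, without the range restriction; your intermediate bound $I = O(n^{2/3}m^{2/3} + n + m)$ is exactly that statement, so your write-up in fact proves the version the paper actually invokes. The only caveat is that the paper also needs the complex version of the theorem (via \cite{TO} and \cite{ZA}) for its main construction, and Sz\'ekely's topological argument does not transfer to $\CC^2$; but that is outside the scope of the stated theorem, which is about $\RR^2$.
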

\noindent This was extended to points and lines in $\CC^2$ in \cite{TO} and \cite{ZA}. An alternative, but equivalent, formulation of this theorem is stated in terms of \emph{$r$-rich lines}; that is, lines that contain at least $r$ points of $P$.
\begin{theorem}\label{thm:szt2}
Let $n,r$ be positive integers with $r \le n^{1/2}$. Then, a set of $n$ points in the plane determines at most $O(\frac{n^2}{r^3})$ $r$-rich lines.
\end{theorem}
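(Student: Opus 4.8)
The plan is to derive Theorem~\ref{thm:szt2} from the incidence bound in Theorem~\ref{thm:szt1} by the usual double-counting translation; the only point requiring attention is the range restriction $n^{1/2}\le m\le n^2$ in the hypothesis of Theorem~\ref{thm:szt1}. So fix a set $P$ of $n$ points, let $L$ be the set of $r$-rich lines, and set $m:=|L|$. We may assume $r\ge 2$ (for $r=1$ one interprets $L$ as the set of lines spanned by $P$, and the bound $O(n^2)$ is then immediate) and $m\ge 1$. Since each line of $L$ contains at least two points of $P$ and any two points lie on a unique line, we have $m\le\binom{n}{2}<n^2$, so the upper constraint in Theorem~\ref{thm:szt1} holds automatically.

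Next I would separate off the regime of few lines. If $m<n^{1/2}$, then since the hypothesis $r\le n^{1/2}$ gives $r^3\le n^{3/2}$, we get $n^2/r^3\ge n^{1/2}>m$, and the desired bound $m=O(n^2/r^3)$ holds with no work. So from now on assume $n^{1/2}\le m<n^2$; then Theorem~\ref{thm:szt1} is applicable to the pair $(P,L)$.

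Finally I would count incidences between $P$ and $L$ in two ways. Every line of $L$ is incident to at least $r$ points of $P$, so the number of incidences is at least $rm$; on the other hand Theorem~\ref{thm:szt1} bounds it by $Cn^{2/3}m^{2/3}$ for some absolute constant $C$. Hence $rm\le Cn^{2/3}m^{2/3}$, i.e. $r\,m^{1/3}\le Cn^{2/3}$, and cubing gives $m\le C^3 n^2/r^3$. Combined with the trivial case this gives $m=O(n^2/r^3)$, as claimed. I do not anticipate a real obstacle: the argument is the standard equivalence between the two forms of Szemer\'edi--Trotter, and the only care needed is to respect both ends of the admissible range of $m$ --- the top end via the elementary count $\binom{n}{2}$ of spanned lines, and the bottom end by absorbing small $m$ into the trivial estimate using $r\le n^{1/2}$.
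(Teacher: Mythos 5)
Your proof is correct, and it is exactly the standard double-counting derivation that the paper implicitly relies on when it asserts Theorem~\ref{thm:szt2} as an ``equivalent formulation'' of Theorem~\ref{thm:szt1} without supplying a proof. Your handling of the range restrictions (the upper end via $m\le\binom{n}{2}$ and the lower end via the trivial bound when $m<n^{1/2}$, using $r\le n^{1/2}$) is precisely the care the paper alludes to when it remarks that outside these ranges the results follow from elementary counting.
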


There are similar versions of these theorems that hold outside the range $m^{1/2} \le n \le m^2$ and $r \le n^{1/2}$; however, in this case both theorems can be proven using elementary counting arguments, and the matching constructions are essentially trivial. Thus, we will focus on the case when $m^{1/2} \le n \le m^2$, and $r \le n^{1/2}$.

It will be our goal in this manuscript to give some new point-line constructions that match the upper bounds from theorems \ref{thm:szt1} and \ref{thm:szt2}. That these theorems are sharp has long been known; historically there were two classical constructions demonstrating this, one due to Erd\H os, and the other due to Elekes. Both are very similar in spirit, but the analysis showing their optimality is a bit different. For simplicity, we focus our discussion for the moment on theorem \ref{thm:szt2}.

The more recent construction, due to Elekes \cite{EL2}, is that, for any $r \le n^{1/2}$, the $r \times n/r$ integer grid has $\Omega(n^2/r^3)$ $r$-rich lines. The argument showing this is essentially trivial. The earlier construction, due to Erd\H os, is slightly more involved but also slightly stronger, in the sense that the same pointset may be used for any desired value of $r$. In particular, it showed that the $n^{1/2} \times n^{1/2}$ integer grid has $\Omega(n^2/r^3)$ $r$-rich lines for any $r \le n^{1/2}$. The proof of this is not difficult, but has traditionally required a small bit of elementary number theory. In \cite{SHSI}, Sheffer and Silier ``interpolated'' between Erd\H os' and Elekes' constructions, and showed (essentially) that a $n^{\alpha} \times n^{1-\alpha}$ integer grid also determines the optimal number of rich lines, for any $r \le n^{\alpha}$ and $\alpha \le 1/2$.

More recently, Guth and Silier gave a different construction \cite{GUSI}. In their argument, the pointset is still a $n^{1/2} \times n^{1/2}$ cartesian product, and one can take any $r \le n^{1/2}$. However, in this case each part of the Cartesian product is a generalized arithmetic progression (GAP) with points of the form $a+b\sqrt{k}$ for square-free $k$ and bounded integers $a$ and $b$. These can be seen as coming from the algebraic number field $\mathbb{Q}(\sqrt{k})$\footnote{For a definition of the terms GAP and algebraic number field, see the beginning of section \ref{sec:prelim}}. In a later paper, the author \cite{ME} gave constructions of $r \times n/r$ cartesian products, where each part is an $d$-dimensional GAP whose basis comes from an arbitrary algebraic number field of degree $d$. These two constructions are analogous in a rather strong way to Erd\H os and Elekes' constructions, respectively. Furthermore, as with Erd\H os and Elekes' constructions, the analysis showing the optimality of the constructions from \cite{GUSI} is somewhat involved, whereas the argument showing the optimality of that from \cite{ME} is essentially trivial.

The missing piece here is to give Erd\H os-type constructions coming from arbitrary number fields. That is, we'd wish to generate a $n^{1/2} \times n^{1/2}$ cartesian product $P$, where each part is an arbitrarily high-dimensional GAP, and $P$ determines the optimal number of $r$-rich lines for any $r \le n^{1/2}$. The primary obstacle in this is that the difficulty of the analysis appears to scale with the degree of the number field. To see this, compare the analysis in \cite{GUSI}, which deals with a degree $2$ number field, to that of \cite{SHSI}, which deals with the integers. The primary goal of this manuscript is to overcome this difficulty; to do so, we replace the number-theoretic arguments with purely incidence-geometric ones. The analysis of these arguments is substantially simpler, and scales easily to any degree number field.

To discuss these results, we need to introduce some notation. Let $\Lambda = \{\lambda_1,\dots,\lambda_d\} \subset \mathbb{C}$ be linearly independent over $\mathbb{Z}$. We say that $\Lambda$ is a \emph{nice basis} if $\lambda_i\lambda_j$ is a $\ZZ$-linear combination of elements of $\Lambda$, for any $1 \le i,j \le d$. Furthermore, for any positive integer $m$, we define 

$$A_m(\Lambda) := \{a_1\lambda_1 + \dots + a_d\lambda_d : a_i \in \mathbb{Z}, |a_i| \le \frac{m^{1/d}}{3}\}.$$

\noindent We observe that $|A_m(\Lambda)| = \Theta_d(m)$, and in particular $|A_m(\Lambda)| \le m$ as long as $m$ is sufficiently large. As noted in \cite{ME}, examples of nice bases are integral bases for algebraic number fields, as well as power bases generated by algebraic integers. These bases exist for any algebraic number field. Now, we are ready to state our main result.

\begin{theorem}\label{thm:main}
Let $\Lambda$ be a nice basis, $0 < \alpha \le 1/2$, and let $P = A_{n^{\alpha}}(\Lambda) \times A_{n^{1-\alpha}}(\Lambda)$. Then, there exists $C' > 0$ (depending on $d,\Lambda$) such that for any $ r \le C'n^{\alpha}$, $P$ determines $\Omega_\Lambda(n^2/r^3)$ $r$-rich lines.
\end{theorem}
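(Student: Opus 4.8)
The plan is to count rich lines in $P = A_{n^\alpha}(\Lambda) \times A_{n^{1-\alpha}}(\Lambda)$ by looking at lines of a fixed rational slope, mimicking the Erd\H{o}s-type argument but replacing the number theory with incidence geometry. Write $A = A_{n^\alpha}(\Lambda)$ and $B = A_{n^{1-\alpha}}(\Lambda)$, and for integers (or more generally elements of the $\ZZ$-span appearing as coordinate differences) consider lines of the form $y = \tfrac{p}{q}x + t$. The key point is that since $\Lambda$ is a nice basis, $A$ and $B$ are closed (up to scaling the box size by a constant) under multiplication by elements of $A_{O(1)}(\Lambda)$; in particular, multiplying a point of $A$ by a bounded element of the ring generated by $\Lambda$ lands in a slightly larger dilate of $A$, which by the $\Theta_d(m)$ size estimate changes cardinalities only by constants. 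So for a slope $p/q$ with $p,q$ coming from $A_{C}(\Lambda)$ for a suitable constant $C$, a positive fraction of the horizontal fibers of $A$ get mapped by $x \mapsto \tfrac{p}{q}x$ into (a dilate of) $B$, and translating by the $\Theta(n^{1-\alpha})$ elements of $B$ we get that each such slope supports $\Omega(n^{1-\alpha} \cdot n^\alpha / r) = \Omega(n/r)$ lines that are $r$-rich, once $r \le C' n^\alpha$.

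The main steps, in order, would be: (1) Set up the correspondence between lines of slope $p/q$ through $P$ and solutions of $q \cdot b = p \cdot a$ with $a \in A$, $b \in B$; using niceness of $\Lambda$, show that for $p, q \in A_{C}(\Lambda)$ with $|p|,|q|$ of comparable size, the number of valid $(a,b)$ is $\Omega_\Lambda(n^\alpha)$ — this is where the old number-theoretic divisibility count gets replaced by the observation that $x \mapsto (p/q)x$ is an injection sending a constant fraction of $A$ into a bounded dilate of $B$. (2) Count the available slopes: the pairs $(p,q)$ with $p,q \in A_{\Theta(r)}(\Lambda)$ and the two "directions" roughly balanced number $\Omega(r^2)$, and distinct reduced slopes among them number $\Omega(r^2)$ as well (distinctness follows from $\ZZ$-linear independence of $\Lambda$). (3) For each such slope, each of the $\Omega(n^\alpha)$ fiber-points combined with the $\Omega(n^{1-\alpha})$ vertical translates gives $\Omega(n)$ incidences arranged on $\Omega(n/r)$ lines each of richness $\ge r$ (a pigeonhole/averaging step, discarding lines that are too poor). (4) Finally, check these lines are genuinely distinct across different slopes (immediate, different slopes) and sum: $\Omega(r^2) \cdot \Omega(n/r) = \Omega(n^2/r^3)$ — wait, that is $\Omega(n r)$, so one must instead organize the count as $\Omega(n^2/r^3)$ rich lines by taking slopes with numerator/denominator of size $\Theta(n^\alpha/r)$ rather than $\Theta(r)$; the number of such slopes is $\Omega(n^{2\alpha}/r^2)$ and each carries $\Omega(n^{1-2\alpha} \cdot r)$... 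I will need to calibrate the slope box size so that the product of (number of slopes) and (rich lines per slope) comes out to $n^2/r^3$, which pins down the right scaling.

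Concretely, I expect the right calibration is: restrict to reduced slopes $p/q$ with $p, q$ ranging over a box of size $\Theta(n^\alpha / r)$ in $A(\Lambda)$, giving $\Omega((n^\alpha/r)^2) = \Omega(n^{2\alpha}/r^2)$ distinct slopes; for each, the map $x \mapsto (p/q)x$ sends $\Omega(n^\alpha)$ points of $A$ into a dilate of $B$ (here we use $q$ small so $x/q$ stays controlled — this is exactly the niceness/dilation input), and combining with $\Omega(n^{1-\alpha})$ vertical translates and averaging yields $\Omega(n^{1-\alpha} \cdot n^\alpha / r) = \Omega(n/r)$ genuinely $r$-rich lines per slope. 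Summing over slopes gives $\Omega(n^{2\alpha}/r^2 \cdot n/r) = \Omega(n^{1+2\alpha}/r^3)$; since $\alpha$ can be as large as $1/2$ this recovers $\Omega(n^2/r^3)$, and for smaller $\alpha$ one additionally varies the slope over the full available range and re-optimizes, or notes that the constraint $r \le C' n^\alpha$ makes the stated bound $\Omega(n^2/r^3)$ the correct target throughout. The main obstacle, and the heart of the argument, is step (1): proving that a constant fraction of $A$ maps into a bounded dilate of $B$ under multiplication-then-division by bounded elements of the nice basis ring, uniformly in the slope, without invoking divisibility properties of a specific number field — this is the "purely incidence-geometric" replacement the introduction promises, and making it robust to the choice of nice basis $\Lambda$ (rather than an integral basis) is the delicate point. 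A secondary technical nuisance will be ensuring the rich lines counted for different slopes are distinct and that the averaging in step (3) does not lose more than a constant factor, which is routine Szemer\'edi--Trotter-style bookkeeping.
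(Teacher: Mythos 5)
Your proposal follows the classical Erd\H{o}s-type route: fix a slope $p/q$ with $p,q$ in a box of $A(\Lambda)$, count how many points of $A$ are carried into a dilate of $B$ by $x \mapsto (p/q)x$, and sum over reduced slopes. This is precisely the argument the paper is structured to avoid, and the two places you flag as ``delicate'' are genuine gaps rather than routine bookkeeping. First, step (1) is not established and is the whole difficulty: for a general nice basis $\Lambda$, the $\ZZ$-span of $\Lambda$ is closed under multiplication but not under division, so $pa/q$ need not lie in that span at all, let alone in a bounded dilate of $B$. Deciding for how many $a \in A$ it does is a divisibility question (gcd/Euler-$\varphi$ counting when $\Lambda = \{1\}$, ideal and norm counting in higher degree), and the ring generated by a nice basis need not be integrally closed or have unique factorization, so ``reduced slopes'' and the claimed $\Omega(r^2)$ count of distinct slopes are not even well-defined without substantial extra input. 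This is exactly the number theory whose difficulty scales with the degree of the field. Second, your own calibration yields $\Omega(n^{1+2\alpha}/r^3)$, which misses the target by a factor of $n^{1-2\alpha}$ whenever $\alpha < 1/2$, and the proposed fix (``re-optimize'') is not specified.

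The paper's argument is structurally different and never divides. It takes a small sub-product $P' = A_{C_1 n^{\alpha}/r}(\Lambda) \times A_{C_1 n^{1-\alpha}/r}(\Lambda)$ of size $\Theta(n/r^2)$ and $\Theta(r^2)$ disjoint translates of it inside $P$; Beck's theorem (Theorem \ref{thm:beck}) gives $\Omega(n^2/r^4)$ lines through two points of each translate; Proposition \ref{prop:arith} (closure of $A_m(\Lambda)$ under addition and multiplication only) shows that any line through two points of a translate passes through the points $(a+t(a-a'),\, b+t(b-b'))$ for $t$ ranging over a set of size at least $r$, all of which lie in $P$, so every such line is $r$-rich; summing over translates gives $\Omega(n^2/r^2)$ incidences, and then the Szemer\'edi--Trotter upper bound (Theorem \ref{thm:szt1}) is applied in reverse to force $|L| = \Omega(n^2/r^3)$. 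If you want to salvage your approach, the missing ingredient is a degree-uniform lower bound on the number of $a \in A$ with $pa/q$ in a dilate of $B$, which is essentially the content of the earlier number-theoretic papers and is what you would need to prove from scratch; the translate-plus-Beck argument sidesteps it entirely.
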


Note that theorem \ref{thm:main} includes all previous constructions, such as those in \cite{ME,EL2,GUSI,SHSI}. As an example, to get the construction of Erd\H{o}s we set $\alpha = 1/2$ and $\Lambda = \{1\}$, and to get that of Guth and Silier \cite{GUSI}, we set $\alpha = 1/2$ and $\Lambda = \{1,\sqrt{k}\}$. Moreover, the proof we provide here is simpler than that of \cite{GUSI}, and uses no number theory. In section \ref{sec:prelim}, we show that this easily gives sharp lower bounds for theorem \ref{thm:szt1} as well.

New constructions for this problem are of interest for several reasons. To start, they contribute to our understanding of the so-called \emph{inverse problem} for Szemer\'edi-Trotter; that is, understanding what types of point-line configurations can determine near-optimal numbers of incidences. There have been a number of results on this problem (see e.g. \cite{DASHSH,EL3,EL5,KASI,PERORUWA,RUSH,SO}), but our understanding is still quite limited even in the case when $P$ is a cartesian product. The Szemer\'edi-Trotter problem has a wide variety of applications in number theory, combinatorial geometry and theoretical computer science (see e.g. \cite{BOBO,BODE,EL,ELRO} for some examples or \cite{DV,EL2, TAVU} for additional results), so meaningful progress on the inverse problem could lead to progress on a wide variety of other problems.

Next, we note that sharp Szemer\'edi-Trotter constructions are used directly to generate best-known constructions in several other problems in incidence geometry; see \cite{GOMOWH, SHSMVADE,SOSZ}. Thus, our new constructions provide new constructions for these problems as well.

\section{Preliminaries}\label{sec:prelim}

In this section we introduce some helpful preliminaries. We will start with a couple of definitions.

\begin{definition}
    A \emph{generalized arithmetic progression (GAP)} is a set of the form $$\{v_0 + a_1v_1 + \dots + a_dv_d : 0 \le a_i \le \ell_i-1, \ a_i \in \mathbb{Z}\},$$ where the set $\{v_0,\dots,v_d\}$ is referred to as its \emph{basis}, $d$ is referred to as its \emph{dimension}, and $\prod\ell_i$ is referred to as its \emph{size}.
    
\end{definition}

Furthermore, an \emph{algebraic number field} refers simply to a finite-degree extension of $\mathbb{Q}$. In the upcoming section, we will be giving sharp constructions for theorem \ref{thm:szt2}. Thus, to start, we show that this also gives sharp constructions for theorem \ref{thm:szt1}. 

Indeed, suppose that we have positive integers $n$ and $m$ (where, say, $\Omega(m^{1/2}) = n =O(m^2)$) and we want to generate a collection of $n$ points and $m$ lines determining $\Omega(n^{2/3}m^{2/3})$ incidences. We set $r = \frac{n^{2/3}}{m^{1/3}}$ and pick any $0 <\alpha < 1/2$ such that $r \le C'n^{\alpha}$. Note that since $n = O(m^2)$, such an $\alpha$ will generally exist. Then, by theorem \ref{thm:main}, the set $P = A_{n^{\alpha}}(\Lambda) \times A_{n^{1-\alpha}}(\Lambda)$ will determine $\Omega(m)$ lines that are each $r$-rich. This amounts to $\Omega(mr) = \Omega(n^{2/3}m^{2/3})$ total incidences. 

Now, we will need the following theorem of Beck \cite{BE}. This was shown originally for points and lines in $\RR^2$; however, since this can be proven using only theorem \ref{thm:szt1}, the result extends also to $\mathbb{C}$, see for example \cite{TO}.

\begin{theorem}[Beck, \cite{BE}]\label{thm:beck}
There exists an absolute constant $C^*$ such that, for any pointset $P \subset \CC^2$ of size $n$, one of the following holds

\begin{enumerate}
    \item $P$ has at least $n/C^*$ points on a line
    \item $P$ has $\Omega(n^2)$ lines through at least two points from $P$.
\end{enumerate}
\end{theorem}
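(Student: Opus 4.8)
The plan is to deduce Beck's theorem directly from the Szemer\'edi--Trotter theorem (Theorem~\ref{thm:szt1}, in the form valid over $\CC$ supplied by \cite{TO,ZA}) via a dyadic decomposition of the lines of $P$ by richness. The one genuinely important point is that the hypothesis ``alternative~(1) fails'' is exactly what is needed to truncate a sum that would otherwise be too large.

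Suppose then that no line meets $P$ in $n/C^*$ or more points, where $C^*$ is a large absolute constant to be pinned down at the end; I will produce $\Omega(n^2)$ lines through at least two points of $P$. \emph{First}, from the complex Szemer\'edi--Trotter theorem one records that for every integer $k \ge 2$ the number $t_k$ of lines meeting $P$ in at least $k$ points satisfies $t_k = O(n^2/k^3 + n/k)$: these lines carry at least $k\,t_k$ incidences with $P$, Theorem~\ref{thm:szt1} bounds the incidences of $n$ points with $t_k$ lines by $O(n^{2/3}t_k^{2/3} + n + t_k)$, and comparing these in the three regimes (together with the trivial bound $t_k \le \binom n2$ when $k$ is below an absolute constant) gives the claim. \emph{Second}, every one of the $\binom n2$ point-pairs lies on a unique line; call a line \emph{heavy} if it meets $P$ in at least $K$ points and \emph{light} otherwise, $K$ a large absolute constant to be chosen. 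Splitting the heavy lines dyadically by $2^j \le |\ell \cap P| < 2^{j+1}$ and using the first step,
\[
\#\{\text{pairs lying on a heavy line}\} \;\le\; \sum_{j\,:\,2^j\ge K} t_{2^j}\binom{2^{j+1}}{2} \;=\; \sum_{j\,:\,2^j\ge K} O\!\left(\frac{n^2}{2^{j}} + n\,2^{j}\right).
\]
Because alternative~(1) fails, $t_k = 0$ once $k \ge n/C^*$, so only the range $K \le 2^j < n/C^*$ contributes; there the first part sums to $O(n^2/K)$ and the second to $O(n\cdot n/C^*) = O(n^2/C^*)$. \emph{Third}, choose $C^*$ large enough that the $O(n^2/C^*)$ contribution is at most $n^2/8$, then $K$ large enough that the $O(n^2/K)$ contribution is at most $n^2/8$; at most $n^2/4$ pairs then lie on heavy lines, so (the finitely many small values of $n$ being trivial) at least $\binom n2 - n^2/4 = \Omega(n^2)$ pairs lie on light lines. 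Since each light line accounts for fewer than $\binom K2 = O(1)$ pairs, there are $\Omega(n^2)$ light lines, each through at least two points of $P$, which is alternative~(2).

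The one place where there is anything to watch is the term $\sum_j n\,2^j$ in the second step: taken over all scales up to $2^j\approx n$ it is of order $n\cdot n = \Theta(n^2)$ and would overwhelm the total number of pairs, so the argument genuinely must use the failure of~(1) to stop the sum at $2^j < n/C^*$, after which it is $O(n^2/C^*)$ and harmless once $C^*$ is large; the only care needed is the order of choices ($C^*$ first, then $K$). The passage to $\CC^2$ is automatic, since the argument uses nothing beyond the Szemer\'edi--Trotter bound and that holds over $\CC$.
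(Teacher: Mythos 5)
Your proof is correct and complete: it is the standard dyadic derivation of Beck's theorem from the Szemer\'edi--Trotter bound, with the key truncation of the sum at $2^j < n/C^*$ handled properly and the constants chosen in a workable order. The paper does not actually prove this statement---it cites Beck's original paper and merely remarks that, since the result can be derived from Theorem~\ref{thm:szt1} alone, it transfers to $\CC^2$; your argument is precisely the derivation being alluded to, so it matches the paper's intended route.
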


Finally, we need the following technical lemma for dealing with the sets $A_m(\Lambda).$ This lemma says, essentially, that $A_m(\Lambda)$ is closed under addition and multiplication, up to the value of $m$.

\begin{prop}\label{prop:arith}
    Let $\Lambda$ be a nice basis of degree $d$, and let $m,m'$ be positive real numbers that are at least $1$. Then, if $a \in A_m(\Lambda)$ and $a' \in A_{m'}(\Lambda)$, it follows that $a \pm a' \in A_{2^d\max\{m,m'\}}(\Lambda)$ and $aa' \in A_{(d^{2}C_\Lambda)^d mm'}(\Lambda)$
\end{prop}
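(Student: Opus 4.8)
The plan is to prove both statements by direct computation, carefully tracking how the coefficients grow and then observing that the result is still a $\ZZ$-linear combination of the $\lambda_i$ with suitably bounded coefficients. Recall that $a \in A_m(\Lambda)$ means $a = \sum_{i=1}^d a_i\lambda_i$ with $a_i \in \ZZ$ and $|a_i| \le m^{1/d}/3$, and similarly $a' = \sum_{i=1}^d a'_i\lambda_i$ with $|a'_i| \le (m')^{1/d}/3$.

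For the additive statement, write $a \pm a' = \sum_{i=1}^d (a_i \pm a'_i)\lambda_i$. Each coefficient satisfies $|a_i \pm a'_i| \le \frac{1}{3}(m^{1/d} + (m')^{1/d}) \le \frac{2}{3}\max\{m,m'\}^{1/d} = \frac{1}{3}(2^d\max\{m,m'\})^{1/d}$, which is exactly the bound needed to conclude $a \pm a' \in A_{2^d\max\{m,m'\}}(\Lambda)$. This part is immediate.

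For the multiplicative statement, expand $aa' = \sum_{i,j} a_i a'_j \lambda_i\lambda_j$. Since $\Lambda$ is a nice basis, each product $\lambda_i\lambda_j$ equals $\sum_{k=1}^d c^{(i,j)}_k \lambda_k$ for integers $c^{(i,j)}_k$; I will introduce $C_\Lambda := \max_{i,j,k} |c^{(i,j)}_k|$ as the relevant structure constant (this is presumably where $C_\Lambda$ is defined, just before or within this lemma). Collecting the coefficient of $\lambda_k$, we get $aa' = \sum_k \big(\sum_{i,j} a_i a'_j c^{(i,j)}_k\big)\lambda_k$, and the $k$-th coefficient is bounded in absolute value by $d^2 \cdot \frac{m^{1/d}}{3} \cdot \frac{(m')^{1/d}}{3} \cdot C_\Lambda = \frac{1}{9}\, d^2 C_\Lambda (mm')^{1/d}$. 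To land in $A_M(\Lambda)$ with $M = (d^2 C_\Lambda)^d mm'$ we need this to be at most $M^{1/d}/3 = \frac{1}{3} d^2 C_\Lambda (mm')^{1/d}$; since $\frac{1}{9} \le \frac{1}{3}$ this holds comfortably (the slack even absorbs the fact that $aa'$ has integer coefficients, which they automatically are here).

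I do not expect any genuine obstacle: both parts are elementary coefficient bookkeeping, and the only mild subtlety is being careful that the target index $M$ is large enough after the triangle inequality and that $M^{1/d}$ distributes correctly over the product $(d^2 C_\Lambda)^d mm'$. If anything, the ``hard'' part is purely notational — fixing a clean definition of the structure constants $c^{(i,j)}_k$ and of $C_\Lambda$ so that the bound $|c^{(i,j)}_k| \le C_\Lambda$ can be invoked uniformly — after which both containments follow in a line or two each.
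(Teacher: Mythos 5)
Your proof is correct and follows essentially the same route as the paper: the same coefficientwise triangle inequality for the sum, and the same expansion of $aa'$ via the structure constants of the nice basis for the product (the paper merely asserts the bound $|b_i| \le d^2 C_\Lambda (mm')^{1/d}/9$ that you derive explicitly). Your reading of $C_\Lambda$ as the maximum absolute value of the structure constants is the intended one, even though the paper never defines it explicitly.
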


\begin{proof}
   Let $a = a_1\lambda_1 + \dots + a_d\lambda_d$ and $a' = a_1'\lambda_1 + \dots + a_d'\lambda_d$, where $|a_i| \le m^{1/d}/3$ and $|a_i'| \le m'^{1/d}/3$ for each $i$. Then $$|a_i \pm a_i'| \le \frac{m^{1/d}}{3} + \frac{m'^{1/d}}{3} \le \frac{2\max\{m,m'\}^{1/d}}{3}$$ for each $i$, so $a \pm a' \in A_{2^d\max\{m,m'\}}(\Lambda)$.

   Next, we observe that, since $\Lambda$ is a nice basis, we have $aa' = b_1\lambda_1 + \dots + b_d\lambda_d$ for integers $b_i$ with $|b_i| \le d^2C_\Lambda (mm')^{1/d}/9$. Thus, $aa' \in A_{(d^{2}C_\Lambda)^d mm'}(\Lambda)$
\end{proof}

\section{The Construction}\label{sec:main}

We begin with a sketch of the proof. Roughly speaking, we will be taking a small section of the cartesian product $P$, which we call $P'$, and considering many translates of it. If $P'$ is sufficiently small, the lines determined by translates of $P'$ will have slopes with numerator and denominators in $A_m(\Lambda)$ for some reasonably small $m$, and thus must hit many points of $P$. Taking the union of these line sets over many translates of $P'$ will give us our desired collection of $r$-rich lines.

Now, we can begin the proof.

\begin{proof}[Proof of Theorem \ref{thm:main}]

 Constants implicit in our big-O notation of this proof are allowed to depend on $d$ and $\Lambda$. We introduce a further constant $C_1 > 0$, which will be chosen to be small with respect to $d$ and $\Lambda$. The constant $C'$  (from the statement of the theorem) will be chosen to be small with respect to $C_1,d$ and $\Lambda$.

To begin, let $P' = A_{C_1n^{\alpha}/r}(\Lambda) \times A_{C_1n^{1-\alpha}/r}(\Lambda)$, and let $s = \lfloor(C_1n^{\alpha}/r)^{1/d}\rfloor$ and $s' = \lfloor(C_1n^{1-\alpha}/r)^{1/d}\rfloor$. We will be taking many translates of $P'$ by vectors $(x,y)$ with $x \in A_r(s\Lambda)$ and $y \in A_r(s'\Lambda)$, where $s\Lambda$ and $s'\Lambda$ denote scalings of $\Lambda$ by $s$ and $s'$ respectively. We make a couple of observations:

\begin{enumerate}
    \item $A_r(s\Lambda) \subset A_{C_1n^{\alpha}}(\Lambda)$ and $A_r(s'\Lambda) \subset A_{C_1n^{1-\alpha}}(\Lambda)$ by definition of the sets $A_m(\Lambda)$.
    \item If $C'$ is sufficiently small compared to $C_1$, all such translates will be disjoint. To see this, note that the translates will be disjoint if $$s \ge \frac{2}{3}\left(\frac{C_1n^{\alpha}}{r}\right)^{1/d} + 1 \ \ \text{ and     } \ \ \ s' \ge \frac{2}{3}\left(\frac{C_1n^{1-\alpha}}{r}\right)^{1/d} + 1,$$ which holds as long as $s$ and $s'$ are larger than a constant depending on $d$. This follows from the fact that
    
    $$s' \ge s \ge  \left(\frac{C_1n^{\alpha}}{r}\right)^{1/d} \ge \left(\frac{C_1}{C'}\right)^{1/d},$$ and taking $C'$ small compared to $C_1$.
\end{enumerate}

Now, for any $(x,y) \in \mathbb{C}^2$, let $L_{(x,y)}$ denote the set of lines containing at least $2$ points of $P'+(x,y)$. Furthermore, we let $L$ denote the union of the $L_{(x,y)}$ over $x \in A_r(s\Lambda)$ and $y \in A_r(s'\Lambda)$. The argument will be divided into a number of claims.

\medskip

\noindent {\bfseries Claim 1: }For any $(x,y)\in\mathbb{C}^2$, $
|L_{(x,y)}| = \Omega(n^2/r^4)$.

\begin{proof}
    We observe again that $C_1n^{\alpha}/r \ge C_1/C'$. Thus, we can choose $C'$ small enough so that $|A_{C_1n^{\alpha}/r}(\Lambda)|$ is larger than $C^*$. Then, $P'+(x,y)$ is a cartesian product with each part of size more than $C^*$ (where $C^*$ is the constant from theorem \ref{thm:beck}). Thus, it cannot contain a $1/C^*$-fraction of its points on a line, and the second part of theorem \ref{thm:beck} must hold. Since $|P' + (x,y)| = \Theta_{C_1}(n/r^2)$, and since $C_1$ depends only on $d$ and $\Lambda$, we have $|L_{(x,y)}| = \Omega(n^2/r^4)$.
\end{proof}

\medskip

\noindent {\bfseries Claim 2:} Every $\ell \in L$ is $r$-rich in $P$.

\begin{proof}
    For each such $\ell$, there exists $x \in A_r(s\Lambda)$ and $y \in A_r(s'\Lambda)$ as well as two points $(a,b), (a',b') \in P'+(x,y)$ such that $\ell$ passes through $(a,b)$ and $(a',b')$. Taking any $t \in A_{3^d r}(\Lambda)$,  we observe that $\ell$ must pass through the point $p_t = (a + t(a-a'),b + t(b-b'))$. Furthermore, by Proposition \ref{prop:arith}, we know that $b-b' \in A_{2^dC_1n^{1-\alpha}/r}(\Lambda)$ and $a-a' \in A_{2^dC_1n^{\alpha}/r}(\Lambda)$. Taking $C_1$ to be sufficiently small (depending only on $\Lambda$), and using (i) as well as Proposition \ref{prop:arith}, we see also that $p_t \in P$. A simple calculation tells us that since $r \ge 1$, we have also that $|A_{3^dr}(\Lambda)| \ge r$. Thus, $\ell$ is $r$-rich.
\end{proof}

\noindent {\bfseries Claim 3:} $L$ determines $\Omega(n^2/r^2)$ incidences with $P$

\begin{proof}
Let $x \in A_r(s\Lambda)$ and $y \in A_r(s'\Lambda)$. As noted before, each of the translates $P' + (x,y)$ is disjoint. By definition, each line from $L_{(x,y)}$ contains at least two points of $P' + (x,y)$, so by claim $1$, lines in $L$ determine $\Omega(n^2/r^4)$ incidences with each such translate. The claim follows from summing over the $\Omega(r^2)$ translates.  
    
\end{proof}

\noindent {\bfseries Claim 4:} $|L| = \Omega(n^2/r^3)$.

\begin{proof}
    We've shown that $P$ and $L$ determine $\Omega(n^2/r^2)$ incidences. Thus

    $$\Omega(n^2/r^2) = |I(P,L)| =  O(n^{2/3}|L|^{2/3} + n + |L|
    )$$

\noindent by Theorem \ref{thm:szt1}. From this we know that either $\Omega(n^2/r^2) = n^{2/3}|L|^{2/3}$ or $\Omega(n^2/r^2) = n$ or $\Omega(n^2/r^2) = |L|$. By assumption we have that $r \le C'n^{\alpha} \le C'n^{1/2}$, so letting $C'$ be sufficiently small compared. In either the first or third possibility, the claim holds.
    
    \end{proof}
\noindent Since each line in $L$ is $r$-rich (by Claim $2$), this completes the proof of Theorem \ref{thm:main}

\end{proof}

\noindent {\bfseries Acknowledgements:} The author would like to thank J\'{o}zsef Solymosi for many helpful conversations.

\end{document}